\theoremstyle{plain} 
\newtheorem{theorem}{\indent\sc Theorem}[section]
\newtheorem{corollary}[theorem]{\indent\sc Corollary}
\newtheorem{proposition}[theorem]{\indent\sc Proposition}
\theoremstyle{definition} 
\newtheorem{definition}[theorem]{\indent\sc Definition}
\newtheorem{remark}[theorem]{\indent\sc Remark}
\newtheorem{example}[theorem]{\indent\sc Example}
\title{On generalized plastic structures}
\author{Adara M. Blaga and Antonella Nannicini}
\date{}
\begin{document}

\maketitle

\markboth{{\small\it {\hspace{1cm} On generalized plastic structures}}}{\small\it{On generalized plastic structures \hspace{1cm}}}

\footnote{ 
2010 \textit{Mathematics Subject Classification}.
53C15; 53B05; 53C50
}
\footnote{ 
\textit{Key words and phrases}.
generalized structure; plastic structure.
}

\begin{abstract}
We introduce the concept of generalized almost plastic structure, and, on a pseudo-Riemannian manifold endowed with two $(1,1)$-tensor fields satisfying some compatibility conditions, we construct a family of generalized almost plastic structures and characterize their integrability with respect to a given affine connection on the manifold.
\end{abstract}


\section{Introduction}
Let $M$ be a smooth manifold and let $E$ be a fiber bundle over $M$. Let us denote by $\Gamma ^\infty (E)$ the set of smooth sections of $E$. A \emph{polynomial structure} on $E$ is a morphism $J:\Gamma ^\infty (E) \rightarrow \Gamma ^\infty (E)$ such that there exists a polynomial $P$ with $P(J)=0$. Almost complex structures, almost product structures, and metallic structures are examples of polynomial structures. A morphism $J$ such that $J^3-pJ-qI=0$, where $p,q$ are positive integers and $I$ is the identity operator on $\Gamma ^\infty (E)$, is called  an \emph{almost nylon structure}, and, if $p=q=1$, then, it is called an \emph{almost plastic structure}. The name plastic goes back to Hans van der Laan (1924) and G. Cordonnier (1928) who discovered architectural proportions based on the unique real solution of the cubic equation $x^3-x-1=0$, given by  $\rho=\sqrt[3]{\frac{9+\sqrt {69}}{18}}+\sqrt[3]{{\frac{9-\sqrt {69}}{18}}}$ and called it, the \emph{plastic number}, \cite{m}.

The present paper aims to focus on almost plastic structures on the generalized tangent bundle $E=TM\oplus T^*M$. In this spirit, we will define a \emph{generalized almost plastic structure} on $M$ as being an almost plastic structure on $E$. Given a pseudo-Riemannian metric on $M$, we construct a family of generalized almost plastic structures induced by two $(1,1)$-tensor fields on $M$ satisfying some compatibility conditions, and we characterize their integrability with respect to a given affine connection on $M$. On the other hand, we define a generalized almost plastic structure by means of a $(1,1)$-tensor field on $M$ satisfying the cubic equation $x^3-x+1=0$ and we establish sufficient integrability conditions in terms of quasi-statistical structures. We remark that this construction gives rise to a {\bf{duality}} between the two equations $x^3-x-1=0$ and $x^3-x+1=0$ in terms of associated generalized structures.

\section{Preliminaries}

\subsection{Plastic matrices}
\begin{definition} Let $\mathbb R (n)$ be the set of real square matrices of order $n$ and let $A\in \nolinebreak \mathbb R (n)$. Then, $A$ is called a \emph{plastic matrix} if $A$ satisfies the equation:
$$A^3-A-I=0,$$
where $I$ is the identity matrix.
\end{definition}

We determine the expressions of the plastic matrices of order $2$, as follows.
\begin{proposition} Let $A \in \mathbb R (2)$ be a plastic matrix. Then, $A$ has one of the following forms:
\begin{equation} \label{M10}A=\rho I,
\end{equation}
where $\rho$ is the plastic number and $I$ is the identity matrix, or
\begin{equation} \label{M20}A=\begin{pmatrix}
                 a_{11} & \displaystyle {\frac {1- a_{11} a_ {22}- a_{11}^2- a_{22}^2}{ a_{21}}} \\
                 a_{21} & a_{22} \\
               \end{pmatrix}
\end{equation}
with $a_{11}, a_{22}\in \mathbb R$, $ a_{21} \in \mathbb R\setminus \{0\}$ and $(trA)^3-trA+1=0$, where $trA:=a_{11}+a_{22}$.
\end{proposition}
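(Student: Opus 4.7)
The natural approach is to use Cayley--Hamilton to collapse the cubic equation $A^3-A-I=0$ into a single linear relation between $A$ and $I$, and then run a dichotomy. Setting $t:=\mathrm{tr}(A)$ and $d:=\det(A)$, Cayley--Hamilton gives $A^2=tA-dI$, and multiplying once more by $A$ yields $A^3=(t^2-d)A-tdI$. Substituting into the plastic equation reduces the whole problem to
\[
(t^2-d-1)A=(td+1)I.
\]

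The key step is then a case split on the scalar $t^2-d-1$. If it is nonzero, $A$ must be a scalar multiple of the identity; reinserting $A=\lambda I$ into $A^3-A-I=0$ produces $\lambda^3-\lambda-1=0$, whose only real root is the plastic number $\rho$, giving form \eqref{M10}. If instead $t^2-d-1=0$, the displayed relation also forces $td+1=0$, so $d=t^2-1$ and eliminating $d$ yields precisely $t^3-t+1=0$, the trace condition of form \eqref{M20}. Expanding $d=a_{11}a_{22}-a_{12}a_{21}$ and equating with $t^2-1=(a_{11}+a_{22})^2-1$ then gives
\[
a_{12}a_{21}=1-a_{11}^2-a_{11}a_{22}-a_{22}^2,
\]
from which the formula for $a_{12}$ in \eqref{M20} is immediate, \emph{provided} $a_{21}\neq 0$.

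The only delicate point, and what I would flag as the main obstacle, is justifying that this second branch does force $a_{21}\neq 0$. I would handle it by direct inspection: if $a_{21}=0$, then $A$ is upper triangular with real eigenvalues $a_{11},a_{22}$, each of which must satisfy $x^3-x-1=0$. Since that cubic has $\rho$ as its unique real root, one gets $a_{11}=a_{22}=\rho$. Writing $A=\rho I+a_{12}N$ with $N$ the strictly upper triangular nilpotent, expansion of $A^3-A-I$ reduces to $(3\rho^2-1)a_{12}N=0$; as $3\rho^2\neq 1$, this forces $a_{12}=0$, so $A=\rho I$ and we are already in form \eqref{M10}. Thus the second branch either falls back to the first or satisfies $a_{21}\neq 0$, and the proposition follows. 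The remainder of the argument is bookkeeping around Cayley--Hamilton.
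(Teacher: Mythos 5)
Your proof is correct, but it takes a genuinely different route from the paper's. The paper computes $A^3$ entrywise and extracts a $4\times 4$ system; the off-diagonal equations factor as $a_{12}(a_{11}^2+a_{12}a_{21}+a_{11}a_{22}+a_{22}^2-1)=0$ and $a_{21}(\cdots)=0$, and the case split is on whether $a_{12}a_{21}$ vanishes. You instead invoke Cayley--Hamilton to reduce $A^3-A-I=0$ to the single relation $(t^2-d-1)A=(td+1)I$ and split on the scalar $t^2-d-1$. Your version is basis-free, makes the trace condition $t^3-t+1=0$ and the determinant relation $d=t^2-1$ (hence the off-diagonal product formula) drop out immediately rather than by ad hoc elimination, and would generalize to any cubic identity satisfied by a $2\times 2$ matrix; the paper's version uses nothing beyond matrix multiplication. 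Your treatment of the delicate point is also sound: the two case splits do not line up (the paper's is on $a_{12}a_{21}=0$, yours on $t^2-d-1=0$), so you correctly cannot conclude $a_{21}\neq 0$ in your second branch for free, and your fallback argument --- $a_{21}=0$ forces $A$ triangular with real eigenvalues, each a root of $x^3-x-1$, hence equal to $\rho$, and then $(3\rho^2-1)a_{12}N=0$ kills the nilpotent part since $\rho>1$ --- correctly returns that subcase to form \eqref{M10}. One could note (though it is not needed for the statement as phrased) that Cayley--Hamilton also gives the converse: any $A$ with $\det A=(\operatorname{tr}A)^2-1$ and $(\operatorname{tr}A)^3-\operatorname{tr}A+1=0$ is plastic, so your reduction actually characterizes the non-scalar plastic matrices.
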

\begin{proof} Let
$$A=\begin{pmatrix}
                 a_{11} & a_{12} \\
                 a_{21} & a_{22} \\
               \end{pmatrix}.
$$
We have:
$$A^3=\begin{pmatrix}
                 a_{11}^3+2a_{11} a_{12}a_{21}+ a_{12}a_{21}a_{22} & a_{11}^2 a_{12}+a_{12}^2a_{21}+a_{11}a_{12}a_{22}+a_{12}a_{22}^2 \\
                 a_{11}^2 a_{21}+a_{22}^2a_{21}+a_{11}a_{21}a_{22}+a_{12}a_{21}^2 & a_{22}^3+2a_{22} a_{12}a_{21}+a_{12}a_{21}a_{11}\\
               \end{pmatrix}.
$$
Then, $A$ is a plastic matrix if and only if the following system is satisfied:
$$\left \{ \begin{array}{l}  a_{11}^3+2a_{11} a_{12}a_{21}+ a_{12}a_{21}a_{22}= a_{11}+1\\

\vspace{0.1cm}

a_{12}(a_{11}^2 +a_{12}a_{21}+a_{11}a_{22}+a_{22}^2-1)=0\\

\vspace{0.1cm}

 a_{21}(a_{11}^2 +a_{12}a_{21}+a_{11}a_{22}+a_{22}^2-1)=0\\
\vspace{0.1cm}

a_{22}^3+2a_{22} a_{12}a_{21}+ a_{12}a_{21}a_{11}=a_{22}+1
\end{array}
\right. .$$
If we suppose $a_{21}=0$ (respectively, $a_{12}=0$), then we get that $a_{11}=a_{22}$ is the plastic number and furthermore, also that $a_{12}=0$ (respectively, also that $a_{21}=0$), hence, we have (\ref{M10}).\\
Now, if we suppose $a_{12}a_{21} \neq 0$, then we get the following conditions:
$$\left \{ \begin{array}{l}  a_{11}^3+2a_{11} a_{12}a_{21}+ a_{12}a_{21}a_{22}= a_{11}+1\\

\vspace{0.1cm}

a_{11}^2 +a_{12}a_{21}+a_{11}a_{22}+a_{22}^2=1\\

\vspace{0.1cm}

a_{22}^3+2a_{22} a_{12}a_{21}+ a_{12}a_{21}a_{11}=a_{22}+1
\end{array}
\right. .$$
In particular, we get:
$$\left \{ \begin{array}{l}

a_{12}a_{21}=1-a_{11}a_{22}- a_{11}^2-a_{22}^2\\

\vspace{0.1cm}

(a_{11}+a_{22})^3-(a_{11}+a_{22})+1=0\\

\end{array}
\right. ,$$
hence, we have (\ref{M20}), and the proof is complete.
\end{proof}

\begin{corollary}
Let $A \in \mathbb R (2)$ be a plastic matrix. Then, $A=\rho I$, where $\rho$ is the plastic number and $I$ is the identity matrix, or there exists an invertible matrix $C \in \mathbb R (2)$, such that $A=C^{-1}BC$, where
\begin{equation} \label{M30}B=\begin{pmatrix}
                 \alpha & 1-\alpha^2 \\
                 1 & 0 \\
               \end{pmatrix},
\end{equation}
for $\alpha$ the unique real solution of the equation $x^3-x+1=0$.
\end{corollary}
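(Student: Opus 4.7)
The plan is to reduce the statement to a standard similarity argument based on the trace and determinant of $A$ and $B$. First I invoke the preceding proposition to split into two cases: if $a_{21}=0$ (or, equivalently, $a_{12}=0$), the proposition directly gives $A=\rho I$, which is the first alternative in the statement. So from now on I assume $A$ has the form (\ref{M20}) with $a_{21}\neq 0$; by the proposition, $trA=a_{11}+a_{22}$ is a real root of $x^3-x+1=0$. The discriminant of this depressed cubic equals $-23<0$, so it admits a unique real root $\alpha$, and therefore $trA=\alpha$.

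Next I compare the characteristic polynomials of $A$ and $B$. Substituting the formula for $a_{12}$ from (\ref{M20}) into $\det A=a_{11}a_{22}-a_{12}a_{21}$ and simplifying, I expect to obtain $\det A=(a_{11}+a_{22})^2-1=\alpha^2-1$. A direct inspection of $B$ yields $trB=\alpha$ and $\det B=\alpha^2-1$. Hence $A$ and $B$ share the characteristic polynomial $p(x)=x^2-\alpha x+(\alpha^2-1)$.

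Finally, I observe that neither $A$ nor $B$ is a scalar matrix (for $A$ because $a_{21}\neq 0$, and for $B$ because its $(2,1)$-entry is $1$), so the minimal polynomial of each coincides with $p$ and each is cyclic on $\mathbb R^2$. Both matrices are therefore similar over $\mathbb R$ to the companion matrix of $p$, and hence to each other, via some invertible $C\in\mathbb R(2)$ with $A=C^{-1}BC$. Concretely, a similarity matrix may be built by taking any nonzero $v\in\mathbb R^2$ that is not an eigenvector of $A$, so that $(v,Av)$ is a basis, and matching it with the basis $(e_2,Be_2)$ for $B$. The main potential obstacle is mild and amounts to the algebraic bookkeeping in the determinant simplification for $A$; everything else is forced by the proposition together with standard rational-canonical-form arguments.
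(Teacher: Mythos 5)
Your proof is correct, and it reaches the similarity claim by a different route than the paper. Both arguments begin from the preceding proposition's dichotomy (the $a_{12}a_{21}=0$ case giving $A=\rho I$, the other case giving the form (\ref{M20}) with $trA=\alpha$), but they diverge at the key step. The paper, in the non-scalar case, simply exhibits an explicit conjugating matrix $C=\begin{pmatrix} a_{21} & a_{22}\\ 0 & 1\end{pmatrix}$ and leaves the verification of $CA=BC$ implicit; that verification does go through, using $a_{12}a_{21}=1-a_{11}a_{22}-a_{11}^2-a_{22}^2$ and $\alpha=a_{11}+a_{22}$. You instead compute $\det A=(a_{11}+a_{22})^2-1=\alpha^2-1=\det B$ and $trA=\alpha=trB$ (your determinant simplification is right), and then invoke the standard fact that two non-scalar $2\times 2$ real matrices with the same characteristic polynomial are both cyclic, hence both similar to the common companion matrix and therefore to each other. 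Your approach is more conceptual and explains \emph{why} such a $C$ must exist (it also makes transparent that $B$ is forced up to similarity by trace and determinant alone), at the cost of not producing the paper's concrete $C$; the paper's approach is shorter on paper but hides a matrix computation. One small remark: your appeal to the uniqueness of the real root of $x^3-x+1=0$ (discriminant $-23<0$) to pin down $trA=\alpha$ is a step the paper glosses over, and it is worth having made explicit.
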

\begin{proof}
Let $A \in \mathbb R (2)$ be a plastic matrix. If $A$ has a real eigenvalue, then, it has multiplicity two and $A=\rho I$. If $A$ has not real eigenvalues, then, it has two complex conjugate eigenvalues and, as a consequence of the previous proposition, their sum is the real solution, $\alpha$, of the equation $x^3-x+1=0$. In particular, if
\begin{equation*} A=\begin{pmatrix}
                 a_{11} & \displaystyle {\frac {1- a_{11} a_ {22}- a_{11}^2- a_{22}^2}{ a_{21}}} \\
                 a_{21} & a_{22} \\
               \end{pmatrix},
\end{equation*}
then we can choose $$C=\begin{pmatrix}
                 a_{21} & a_ {22}\\
                 0 & 1 \\
               \end{pmatrix}$$
and, as $\alpha=a_{11}+a_{22}$, we get the statement.
\end{proof}

\subsection{Almost plastic structures}

\begin{definition}
Let $M$ be a smooth manifold. An \emph{almost plastic structure} on $M$ is a tensor field $J$, of type $(1,1)$, on $M$, which satisfies the equation:
$$J^3-J-I=0.$$
If $J$ is an almost plastic structure on $M$, then, $(M,J)$ is called an \emph{almost plastic manifold}.
\end{definition}

\begin{definition}
Let $(M,g)$ be a pseudo-Riemannian manifold. An \emph{almost plastic pseudo-Riemannian structure} on $(M,g)$ is an almost plastic structure $J$ on $M$ such that $g(JX,Y)=g(X,JY)$ for all $X,Y\in\Gamma^\infty(TM)$. If $J$ is an almost plastic pseudo-Riemannian structure on $(M,g)$, then, $(M,g,J)$ is called an \emph{almost plastic pseudo-Rieman\-ni\-an manifold}.
\end{definition}

Let $N(J)$ be the Nijenhuis tensor field of $J$, defined as:
$$N(J)(X,Y):=[JX,JY]-J[JX,Y]-J[X,JY]+J^2[X,Y]$$
for all $X,Y\in\Gamma^\infty(TM)$.

According to \cite{v}, we say that an almost plastic structure $J$ is \emph{integrable} if $N(J)=0$. We will call \emph{plastic structure} an integrable almost plastic structure. An almost plastic manifold $(M,J)$ with $J$ integrable will be called a \emph{plastic manifold}.

\begin{example} Let $\rho=\sqrt[3]{\frac{9+\sqrt {69}}{18}}+\sqrt[3]{{\frac{9-\sqrt {69}}{18}}}$ be the plastic number. Then, $J=\rho I$ is a pseudo-Riemannian plastic structure on any pseudo-Riemannian manifold $(M,g)$ which is called the \emph {trivial pseudo-Riemannian plastic structure}.
\end{example}

\begin{remark}
We remark that the concept of "almost plastic Riemannian structure" has no sense because, excepting the trivial case $J=\rho I$, a plastic structure is not real diagonalizable and then it cannot be symmetric with respect to a positive definite scalar product.
\end{remark}

\begin{remark}
We remark that an almost plastic structure $J$ is invertible and the inverse tensor field is given by $J^{-1}=J^2-I$. Indeed, $J(J^2-I)=J^3-J=I$. However, $J^{-1}$ is not an almost plastic structure because $(J^{-1})^3-J^{-1}-I=-J$.
\end{remark}

\begin{remark}
A \emph{metallic structure} on $M$, \cite{metalic}, is a tensor field $J$, of type $(1,1)$, on $M$, which satisfies the equation:
$$J^2-pJ-qI=0,$$
where $p,q$ are positive integers. We remark that, if $J$ is a metallic structure, then, $J$ is an almost plastic structure if and only if
$$\left \{ \begin{array}{l}

p^3-p+1=0\\

\vspace{0.1cm}

q=1-p^2\\

\end{array}
\right. \ \ \textrm{or} \ \ \left \{ \begin{array}{l}

q\neq 1-p^2\\

\vspace{0.1cm}

J=\displaystyle{\frac{1-pq}{p^2+q-1}}I\\

\end{array}
\right. ,$$
respectively, $J$ satisfies a \textbf{dual} equation, $J^3-J+I=0$, if and only if
$$\left \{ \begin{array}{l}

p=\rho \ \ \textrm{is the plastic number}\\

\vspace{0.1cm}

q=1-\rho^2\\

\end{array}
\right. \ \ \textrm{or} \ \ \left \{ \begin{array}{l}

q\neq 1-p^2\\

\vspace{0.1cm}

J=-\displaystyle{\frac{1+pq}{p^2+q-1}}I\\

\end{array}
\right. .$$
\end{remark}

\section{Plastic structures in generalized geometry}

\subsection{Geometrical properties of the generalized tangent bundle}

Let $TM\oplus T^*M$ be the generalized tangent bundle of $M$. On $TM\oplus T^*M$, we consider the natural indefinite metric
\begin{equation*}
<X+\eta,Y+\beta>:=-\frac{1}{2}(\eta(Y)+\beta(X))
\end{equation*}
and the natural symplectic structure
\begin{equation*}
(X+\eta,Y+\beta):=-\frac{1}{2}(\eta(Y)-\beta(X))
\end{equation*}
for all $X,Y\in \Gamma^{\infty}(TM)$ and $\eta,\beta\in \Gamma^\infty(T^*M).$

If $g$ is a non-degenerate $(0,2)$-tensor field on $M$, we will denote by $g$ the flat isomorphism $\flat_g:\Gamma^{\infty}(TM)\rightarrow \Gamma^{\infty}(T^*M)$, $\flat_g(X):=i_Xg$, and by $g^{-1}$ its inverse, and we define the bilinear form, $\check g$, on $TM\oplus T^*M$ by:
\begin{equation*}
\check g(X+\eta, Y+\beta):=g(X,Y)+g(g^{-1}(\eta),g^{-1}(\beta))
\end{equation*}
for all $X,Y\in \Gamma^\infty(TM)$ and $\eta,\beta\in \Gamma^\infty(T^*M)$.

Furthermore, given an affine connection $\nabla$ on $M$, we define the affine connections $\hat{\nabla}$ and $\check{\nabla}$ on $TM \oplus T^*M$ by:
\begin{equation*}
\hat{\nabla}_{X+\eta}(Y+\beta):=\nabla_XY+g(\nabla_X(g^{-1}(\beta)))
\end{equation*}
and
\begin{equation*}
\check{\nabla}_{X+\eta}(Y+\beta):=\nabla_XY+\nabla_X\beta
\end{equation*}
for all $X,Y\in \Gamma^\infty(TM)$ and $\eta,\beta\in \Gamma^\infty(T^*M)$.
We remark that $\hat \nabla =\check \nabla$ if and only if $\nabla g=0$.

Finally, we define the bracket $[\cdot,\cdot]_{\nabla}$:
\begin{equation*}
[X+\eta,Y+\beta]_{\nabla}:=[X,Y]+\nabla_X\beta-\nabla_Y\eta
\end{equation*}
for all $X,Y\in \Gamma^\infty(TM)$ and $\eta,\beta\in \Gamma^\infty(T^*M)$.

\subsection{Generalized almost plastic structures}

Any morphism $\hat J:\Gamma^{\infty}(TM\oplus T^*M)\rightarrow \Gamma^{\infty}(TM\oplus T^*M)$ will be called
a \textit{generalized structure on $M$}.

\begin{definition} Let $\hat J:\Gamma^{\infty}(TM\oplus T^*M)\rightarrow \Gamma^{\infty}(TM\oplus T^*M)$ be a generalized structure on $M$. If $\hat J$ satisfies the equation
$${\hat J}^3-\hat J-I=0,$$
then, $\hat J$ is called a \emph{generalized almost plastic structure on $M$}.
\end{definition}

\begin {example} Let $M$ be a smooth manifold and let $J_1,J_2$ be almost plastic structures on $M$. Then, the generalized structure
\begin{equation} \label{m10}\hat{J}:=\begin{pmatrix}
                 J_1 & 0 \\
                 0 & J_2^* \
               \end{pmatrix},
\end{equation}
where $J_2^*:\Gamma^ \infty(T^*M)\rightarrow \Gamma^ \infty (T^*M)$ is defined by $J_2^*(\eta)(X):=\eta(J_2X)$ for all $\eta \in \Gamma^ \infty(T^*M)$ and $X\in \Gamma ^\infty (TM)$, is a generalized almost plastic structure on $M$. For $J_1=J_2=J$, we will denote the induced generalized almost plastic structure  by:
\begin{equation} \label{M100}\hat{J}:=\begin{pmatrix}
                 J & 0 \\
                 0 & J^* \\
               \end{pmatrix}.
\end{equation}
\end{example}

\begin{proposition} Let $\hat J$ be the generalized almost plastic structure defined by (\ref{M100}). Then,
\begin{equation*}
<\hat J(X+\eta),Y+\beta>=<X+\eta,\hat J(Y+\beta)>;
\end{equation*}
moreover, if $g$ is a pseudo-Riemannian metric and $(M,g,J_1), (M,g,J_2)$ are almost plastic pseudo-Riemannian manifolds, then, the structure ${\hat J}$ defined by (\ref{m10}) satisfies:
\begin{equation*}
\check g(\hat J(X+\eta), Y+\beta)=\check g(X+\eta,\hat J(Y+\beta))
\end{equation*}
for all $X,Y\in \Gamma^\infty(TM)$ and $\eta,\beta\in \Gamma^\infty(T^*M)$.
\end{proposition}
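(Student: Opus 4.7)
The plan is to verify both identities by direct expansion from the definitions, leaning only on the fact that $J^*$ is the transpose endomorphism (so that $(J^*\eta)(X)=\eta(JX)$ by construction) and, for the second identity, on the $g$-self-adjointness of $J_1$ and $J_2$.

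For the first identity I would simply unwind $\hat J$ via (\ref{M100}) together with the definition of the natural indefinite pairing on $TM\oplus T^*M$. Both sides collapse to $-\tfrac{1}{2}\bigl(\eta(JY)+\beta(JX)\bigr)$, the key move being to transfer $J^*$ off the $1$-form using $(J^*\eta)(Y)=\eta(JY)$. No hypothesis on $g$ or on the plastic equation enters here; self-adjointness of $\hat J$ with respect to $<\cdot,\cdot>$ is a formal feature of the transpose construction.

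For the second identity I would first establish the intertwining $g^{-1}\circ J_2^{*}=J_2\circ g^{-1}$ between the musical isomorphism and the two avatars of $J_2$. This follows by pairing both sides with an arbitrary $Z\in\Gamma^\infty(TM)$ and invoking $g$-self-adjointness of $J_2$, after which nondegeneracy of $g$ gives the equality. With that lemma in hand, expanding $\check g(\hat J(X+\eta),Y+\beta)$ via the definitions of $\check g$ and $\hat J$ produces $g(J_1X,Y)+g(J_2 g^{-1}(\eta),g^{-1}(\beta))$; the first summand is handled by $g$-self-adjointness of $J_1$, and the second by $g$-self-adjointness of $J_2$ together with the intertwining lemma (used in reverse to reintroduce $J_2^{*}$ on $\beta$), yielding $\check g(X+\eta,\hat J(Y+\beta))$.

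I do not expect any substantive obstacle; the statement is essentially a bookkeeping check, and in particular the plastic equation $J^3-J-I=0$ plays no role (only the compatibility with $g$ matters). The single point worth flagging explicitly is the intertwining $g^{-1}\circ J_2^{*}=J_2\circ g^{-1}$, since it is precisely what converts the transpose $J_2^{*}$ acting on $T^*M$ into the original tensor $J_2$ acting on $TM$ after one passes through the musical isomorphisms; without it the cotangent summand in $\check g$ cannot be matched across the two sides.
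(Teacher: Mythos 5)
Your proposal is correct and follows essentially the same route as the paper: both identities are verified by direct expansion, with the first reducing to the defining property $(J^*\eta)(Y)=\eta(JY)$ and the second using $g$-self-adjointness of $J_1$ and $J_2$ together with the intertwining $g^{-1}\circ J_2^{*}=J_2\circ g^{-1}$ (which the paper uses implicitly in its chain of equalities and you sensibly isolate as a lemma). No gaps.
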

\begin{proof}
We have: $$<\hat J(X+\eta), Y+\beta>=<JX+J^*(\eta),Y+\beta>=-\frac{1}{2}(\eta(JY)+\beta(JX));$$
on the other hand, $$<X+\eta, \hat J(Y+\beta)>=<X+\eta,JY+J^*(\beta)>=-\frac{1}{2}(\eta(JY)+\beta(JX)),$$
and the first statement is proved.
Moreover, ${\hat J}$ defined by (\ref{m10}) satisfies:
\begin{align*}
\check g({\hat J}(X+\eta), Y+\beta)&=\check g(J_1X+J_2^*(\eta),Y+\beta)=g(J_1X,Y)+g(g^{-1}(J_2^*(\eta)),g^{-1}(\beta))\\
&=g(X,J_1Y)+g(J_2(g^{-1}(\eta)),g^{-1}(\beta))\\
&=g(X,J_1Y)+g(g^{-1}(\eta),J_2(g^{-1}(\beta)))\\
&=g(X,J_1Y)+g(g^{-1}(\eta),g^{-1}(J_2^*(\beta)))\\
&=\check g(X+\eta,{\hat J}(Y+\beta)),
\end{align*}
and the proof is complete.
\end{proof}

\begin{proposition}
Let $\hat J$ be the generalized almost plastic structure defined by (\ref {m10}). Then, $\hat \nabla \hat J=0$ if and only if
$$\left \{ \begin{array}{l}

\nabla J_1=0\\

\vspace{0.1cm}

\nabla J_2=0\\
\end{array}
\right. .$$
Moreover, $\check \nabla  \hat J=0$ if and only if
$$\left \{ \begin{array}{l}

\nabla J_1=0\\

\vspace{0.1cm}

\nabla J_2=0\\
\end{array}
\right. .$$
\end{proposition}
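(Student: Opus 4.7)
The plan is to unwind both $\hat\nabla\hat J$ and $\check\nabla\hat J$ via the Leibniz rule and read off the equivalent conditions by splitting the outputs into their $TM$- and $T^*M$-components. Because $\hat J$ as well as both connections preserve the decomposition $TM\oplus T^*M$ (with $\hat J$ acting by $J_1$ on vectors and $J_2^*$ on covectors), each calculation factors cleanly. In both cases the vector part will come out to be $(\nabla_X J_1)(Y)$, forcing $\nabla J_1=0$ at once, so the substantive work lies entirely in the covector part.

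I would start with the $\check\nabla$ case, which is the more elementary of the two. Expanding via Leibniz gives
\begin{equation*}
(\check\nabla_{X+\eta}\hat J)(Y+\beta)=(\nabla_X J_1)(Y)+\bigl(\nabla_X(J_2^*\beta)-J_2^*(\nabla_X\beta)\bigr),
\end{equation*}
and the covector bracket equals $(\nabla_X J_2)^*(\beta)$ by the elementary identity $\nabla J_2^*=(\nabla J_2)^*$, which is immediate from $(J_2^*\beta)(Z)=\beta(J_2 Z)$ and the derivation property of $\nabla$. Since $S\mapsto S^*$ is a pointwise linear isomorphism from $(1,1)$-tensor fields on $TM$ to $(1,1)$-tensor fields on $T^*M$, the condition $(\nabla_X J_2)^*=0$ for all $X$ is equivalent to $\nabla J_2=0$.

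For $\hat\nabla$ the same outline applies, but the covector part is transported through the musical isomorphisms $\flat_g,\sharp_g$: after expansion it takes the form
\begin{equation*}
g\bigl(\nabla_X(g^{-1}(J_2^*\beta))\bigr)-J_2^*\bigl(g(\nabla_X(g^{-1}\beta))\bigr).
\end{equation*}
Setting $V:=g^{-1}(\beta)$ and using the identity $J_2^*\circ g=g\circ J_2$ (equivalent to the $g$-symmetry of $J_2$ inherited from the almost plastic pseudo-Riemannian setting already invoked in the previous proposition), the displayed expression collapses to $g\bigl((\nabla_X J_2)(V)\bigr)$. Its vanishing as $\beta$, and hence $V$, ranges over all sections is equivalent to $\nabla J_2=0$.

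The main obstacle is the bookkeeping in the $\hat\nabla$ case: one has to check that the covector component, after being pushed through $\flat_g$ and $\sharp_g$, genuinely reduces to $\nabla J_2$ and not to its $g$-adjoint $\tilde J_2:=\sharp_g\circ J_2^*\circ\flat_g$. The $g$-symmetry of $J_2$ identifies these two tensor fields, and it is precisely this identification that allows the equivalence in the proposition to come out with $\nabla J_2=0$ on the nose rather than with the adjoint; without it one would only recover $\nabla\tilde J_2=0$.
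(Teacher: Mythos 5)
Your proposal is correct and follows essentially the same route as the paper: a Leibniz-rule expansion of $\hat\nabla\hat J$ and $\check\nabla\hat J$ component by component, giving $(\nabla_XJ_1)Y$ in the vector slot and $(\nabla_XJ_2^*)\beta$ (respectively $g((\nabla_XJ_2)(g^{-1}(\beta)))$) in the covector slot, which are exactly the two formulas the paper records as ``a direct computation.'' The only point worth flagging is that your explicit appeal to the $g$-symmetry of $J_2$ in the $\hat\nabla$ case is a genuine clarification rather than a detour: the paper's formula $g((\nabla_XJ_2)(g^{-1}(\beta)))$ tacitly presupposes $g^{-1}\circ J_2^{*}\circ g=J_2$, which is not among the hypotheses attached to (\ref{m10}) as stated, and without it one only recovers the vanishing of $\nabla\tilde J_2$ for the $g$-adjoint $\tilde J_2=g^{-1}\circ J_2^{*}\circ g$, exactly as you observe.
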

\begin{proof}
A direct computation gives:
$$(\hat \nabla \hat J)_{X+\eta}(Y+\beta)=(\nabla_X J_1)Y+g((\nabla _X J_2)(g^{-1}(\beta)))$$
for all $X, Y\in \Gamma^\infty (TM)$ and $\eta, \beta\in \Gamma^\infty (T^*M)$, and the first statement is proved.
Moreover, a direct computation gives:
$$(\check \nabla \hat J)_{X+\eta}(Y+\beta)=(\nabla_X J_1)Y+(\nabla_XJ_2^*)\beta$$
for all $X, Y\in \Gamma^\infty (TM)$ and $\eta, \beta\in \Gamma^\infty (T^*M)$, and the proof is complete.
\end{proof}

\subsection{Generalized almost plastic structures defined by two $(1,1)$-tensor fields}

Taking inspiration from the form of order two plastic matrices, we now construct a family of generalized almost plastic structures on a pseudo-Riemannian manifold.

\begin{proposition}
Let $(M,g)$ be a pseudo-Riemannian manifold and let $J_1$, $J_2$ be two $(1,1)$-tensor fields on $M$ satisfying the following conditions:\\
(i) $ J_1J_2=J_2J_1$\\
(ii) $(J_1+J_2)^3-(J_1+J_2)+I=0$\\
(iii) $g(J_1X,Y)=g(X,J_1Y)$, for all $X,Y\in \Gamma^\infty(TM)$\\
(iv) $g(J_2X,Y)=g(X,J_2Y)$, for all $X,Y\in \Gamma^\infty(TM)$. Then,
\begin{equation} \label{m15}\hat{J}:=\begin{pmatrix}
                 J_1 & (I-J_1J_2-J^2_1-J^2_2)g^{-1} \\
                 g & J_2^* \\
               \end{pmatrix}
\end{equation}
is a generalized almost plastic structure on $M$.
\end{proposition}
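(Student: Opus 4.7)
The plan is to verify the cubic relation $\hat J^{3}-\hat J-I=0$ by conjugating $\hat J$ into an operator-valued $2\times 2$ matrix whose entries all act on $TM$ and commute pairwise, thereby reducing the problem to a manipulation of commutative polynomials. Conditions (iii) and (iv) are equivalent to the intertwining relations $g\circ J_{i}=J_{i}^{*}\circ g$, equivalently $J_{i}\circ g^{-1}=g^{-1}\circ J_{i}^{*}$, for $i=1,2$. Using these, I introduce the bundle isomorphism
$$\Psi:=\begin{pmatrix} I & 0 \\ 0 & g \end{pmatrix}\colon TM\oplus TM\longrightarrow TM\oplus T^{*}M$$
and a direct computation yields
$$\tilde J:=\Psi^{-1}\hat J\,\Psi=\begin{pmatrix} J_{1} & I-J_{1}J_{2}-J_{1}^{2}-J_{2}^{2}\\ I & J_{2} \end{pmatrix},$$
which is the operator analogue of the scalar plastic matrix (\ref{M20}) with $a_{11}=J_{1}$, $a_{22}=J_{2}$, $a_{21}=I$; hypothesis (ii) plays the role of the scalar trace condition $(\mathrm{tr}\,A)^{3}-\mathrm{tr}\,A+1=0$. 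Since conjugation preserves polynomial identities, it suffices to prove $\tilde J^{3}=\tilde J+I$.

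The decisive simplification is that, by (i), the four entries of $\tilde J$ all lie in the commutative subalgebra of $\Gamma^{\infty}(\mathrm{End}(TM))$ generated by $I$, $J_{1}$, $J_{2}$, so I may treat $J_{1}$ and $J_{2}$ as commuting indeterminates and compute $\tilde J^{2}$ and $\tilde J^{3}$ block by block. A short expansion, whose only non-obvious step is to rewrite $(J_{1}+J_{2})-(J_{1}+J_{2})^{3}$ as $I$ via (ii), gives
$$\tilde J^{2}=\begin{pmatrix} I-J_{1}J_{2}-J_{2}^{2} & I+J_{1}J_{2}(J_{1}+J_{2})\\ J_{1}+J_{2} & I-J_{1}J_{2}-J_{1}^{2} \end{pmatrix}.$$

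Multiplying $\tilde J\cdot\tilde J^{2}$ block by block produces four polynomial expressions in $J_{1},J_{2}$, each of which reduces, after grouping, to the cubic $(J_{1}+J_{2})^{3}-(J_{1}+J_{2})+I$ vanishing by (ii). For instance, the $(1,1)$-block of $\tilde J^{3}$ collapses to $2J_{1}+J_{2}-(J_{1}+J_{2})^{3}=J_{1}+I$, matching the $(1,1)$-block of $\tilde J+I$; the $(2,1)$- and $(2,2)$-blocks drop out by direct inspection. The main obstacle is the $(1,2)$-block, where serious bookkeeping is required: one must recognise $J_{1}^{3}+3J_{1}^{2}J_{2}+3J_{1}J_{2}^{2}+J_{2}^{3}$ as $(J_{1}+J_{2})^{3}$ inside a sum of nine terms, after which (ii) reduces the block to $I-J_{1}J_{2}-J_{1}^{2}-J_{2}^{2}$, the corresponding entry of $\tilde J+I$. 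Conjugating back by $\Psi$, the identity $\tilde J^{3}=\tilde J+I$ transfers to $\hat J^{3}=\hat J+I$, which completes the proof.
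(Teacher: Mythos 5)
Your argument is correct --- the conjugation identity, the form of $\tilde J^2$, and all four blocks of $\tilde J^3$ all check out --- but it is organized differently from the paper's proof. The paper verifies the cubic directly on sections of $TM\oplus T^*M$: it expands $\hat J(X+\eta)$, $\hat J^2(X+\eta)$ and $\hat J^3(X+\eta)$ componentwise, using (i), (iii), (iv) along the way to move $J_1$, $J_2$ past $g$ and $g^{-1}$, and concludes by applying (ii) (and its adjoint) to collapse $(J_1+J_2)-(J_1+J_2)^3$ to $I$ in each component. You instead conjugate by $\Psi=\operatorname{diag}(I,g)$, which turns $\hat J$ into an operator matrix on $TM\oplus TM$ whose entries commute by (i); the verification then becomes exactly the scalar computation behind the classification (\ref{M20}) of order-two plastic matrices, with $a_{11}=J_1$, $a_{22}=J_2$, $a_{21}=I$, and with hypothesis (ii) playing the role of the trace condition. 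This buys two things: it upgrades the paper's ``inspiration from order-two plastic matrices'' to an honest reduction, and it isolates the role of the metric in the single identity $g^{-1}J_2^*g=J_2$; as a by-product it shows that hypothesis (iii) is not actually needed for the cubic identity itself (only (iv) enters your conjugation), whereas the paper's bookkeeping invokes (iii) to write the $T^*M$-components in adjoint form. What the paper's direct computation buys in exchange is the explicit componentwise formula for $\hat J^3(X+\eta)$, which is reused immediately afterwards in the Remark establishing the duality between $x^3-x-1=0$ and $x^3-x+1=0$.
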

\begin{proof}
From
$$\hat J(X+\eta)=\left(J_1X+(I-J_1J_2-J^2_1-J^2_2)g^{-1}(\eta)\right)+\left(g(X)+J^*_2(\eta)\right),$$ by using the properties (i), (iii), (iv), we get:
\begin{align*}
\hat J^2(X+\eta)&=\left(J_1^2X+(I-J_1J_2-J^2_1-J^2_2)J_1g^{-1}(\eta)+ (I-J_1J_2-J^2_1-J^2_2)(X+J_2g^{-1}(\eta))\right)\\
&\hspace{12pt}+\left(g(J_1X)+ (I-J_1J_2-J^2_1-J^2_2)^*(\eta)+g(J_2X)+(J_2^*)^2(\eta)\right)\\
&=\left((I-J_1J_2-J^2_2)X+(J_1+J_2-2J_1^2J_2-2J^2_2J_1-J_1^3-J_2^3)g^{-1}(\eta)\right)\\
&\hspace{12pt}+\left(g(J_1X)+ (I-J_1J_2-J^2_1)^*(\eta)+g(J_2X)\right);
\end{align*}
moreover,
\begin{align*}
\hat J^3(X+\eta)&=\left((J_1+ (J_1+J_2)-(J_1+J_2)^3)X+(I-J_1J_2-J_1^2-J_2^2)g^{-1}(\eta)\right)\\
&\hspace{12pt}+\left(g(X)+(J_2^*+(J_1^*+J_2^*)-(J_1^*+J_2^*)^3(\eta)\right),
\end{align*}
and the proof is complete.
\end{proof}

\begin{remark} From the proof of the previous proposition, we get that, if $(J_1+J_2)$ is an almost plastic structure on $M$, then, the generalized structure $\hat J$ defined by (\ref{m15}) satisfies the condition $\hat J^3-\hat J+I=0$. Therefore, this construction of a generalized structure defines a {\bf duality} between the two equations $x^3-x-1=0$ and $x^3-x+1=0$.
\end{remark}

\begin{proposition}
Let $\hat J$ be the generalized almost plastic structure defined by (\ref {m15}). Then, $\hat \nabla \hat J=0$ if and only if $$\left \{ \begin{array}{l}

\nabla J_1=0\\

\vspace{0.1cm}

\nabla J_2=0\\
\end{array}
\right. .$$
Moreover, $\check \nabla  \hat J=0$ if and only if $$\left \{ \begin{array}{l}

\nabla J_1=0\\

\vspace{0.1cm}

\nabla J_2=0\\

\vspace{0.1cm}

\nabla g=0 \\
\end{array}
\right. .$$
\end{proposition}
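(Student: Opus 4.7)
The plan is to imitate the direct-computation approach used in the earlier proposition about (\ref{m10}), but now tracking the extra off-diagonal mixing between $TM$ and $T^*M$ caused by $g$ and $g^{-1}$ in the matrix representation (\ref{m15}). First I would set $K:=I-J_1J_2-J_1^2-J_2^2$ so that $\hat J(X+\eta)=(J_1X+Kg^{-1}(\eta))+(g(X)+J_2^*(\eta))$, and record the two identities that follow immediately from condition (iv): $g^{-1}\circ J_2^*=J_2\circ g^{-1}$ and $J_2^*\circ g=g\circ J_2$. These let me pass $J_2$ and $J_2^*$ through $g$ and $g^{-1}$ cleanly whenever such a transfer is needed.

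For $\hat\nabla\hat J$, the definition $\hat\nabla_{X+\eta}(Y+\beta)=\nabla_XY+g(\nabla_X(g^{-1}(\beta)))$ means that covectors are differentiated via their $g^{-1}$-sharp, and $g$, $g^{-1}$ behave like constants of differentiation. Expanding $\hat\nabla_{X+\eta}(\hat J(Y+\beta))-\hat J(\hat\nabla_{X+\eta}(Y+\beta))$ should therefore collapse to
\[
(\hat\nabla\hat J)_{X+\eta}(Y+\beta)=\bigl((\nabla_XJ_1)Y+(\nabla_XK)g^{-1}(\beta)\bigr)+g((\nabla_XJ_2)g^{-1}(\beta)),
\]
with all $g$-derivatives absorbed. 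Vanishing for arbitrary $Y,\beta$ gives $\nabla J_1=0$, $\nabla K=0$, and (via nondegeneracy of $g$) $\nabla J_2=0$; conversely, if $\nabla J_1=\nabla J_2=0$, then $\nabla K=0$ is automatic from $K=I-J_1J_2-J_1^2-J_2^2$, proving the first equivalence.

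For $\check\nabla\hat J$, the connection $\check\nabla$ differentiates covectors directly, so the discrepancy $(\nabla_Xg)$ now enters. I would use the product-rule identities $\nabla_X(g(Y))=g(\nabla_XY)+(\nabla_Xg)(Y,\cdot)$ and $\nabla_X(g^{-1}(\beta))-g^{-1}(\nabla_X\beta)=-g^{-1}((\nabla_Xg)(g^{-1}(\beta),\cdot))$, together with $\nabla_XJ_2^*=(\nabla_XJ_2)^*$, to read off
\begin{align*}
(\check\nabla\hat J)_{X+\eta}(Y+\beta)=&\bigl((\nabla_XJ_1)Y+(\nabla_XK)g^{-1}(\beta)-Kg^{-1}((\nabla_Xg)(g^{-1}(\beta),\cdot))\bigr)\\
&+\bigl((\nabla_Xg)(Y,\cdot)+(\nabla_XJ_2)^*(\beta)\bigr).
\end{align*}
Because the cotangent summands $(\nabla_Xg)(Y,\cdot)$ and $(\nabla_XJ_2)^*(\beta)$ depend on disjoint arguments, each must vanish separately, forcing $\nabla g=0$ and $\nabla J_2=0$. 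With $\nabla g=0$ in hand, the residual tangent expression reduces to $(\nabla_XJ_1)Y+(\nabla_XK)g^{-1}(\beta)$, giving $\nabla J_1=0$ (and again $\nabla K=0$ automatically). The converse is a direct substitution. The main technical obstacle is the consistent bookkeeping of $\nabla g$ versus $\nabla g^{-1}$ in the $K g^{-1}$ term, but isolating the clean identity $\nabla_X(g^{-1}(\beta))-g^{-1}(\nabla_X\beta)=-g^{-1}((\nabla_Xg)(g^{-1}(\beta),\cdot))$ once and for all avoids repeating the argument.
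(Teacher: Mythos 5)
Your proposal is correct and follows essentially the same route as the paper: a direct computation of $(\hat\nabla\hat J)_{X+\eta}(Y+\beta)$ and $(\check\nabla\hat J)_{X+\eta}(Y+\beta)$, followed by reading off the conditions from the independence of the tangent and cotangent components in $Y$ and $\beta$. Your expanded form $(\nabla_XK)g^{-1}(\beta)-Kg^{-1}((\nabla_Xg)(g^{-1}(\beta),\cdot))$ is exactly the paper's term $(\nabla_X(Kg^{-1}))(\beta)$ written out via the product rule, and your observation that $\nabla K=0$ is automatic from $\nabla J_1=\nabla J_2=0$ correctly justifies the ``if and only if.''
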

\begin{proof} A direct computation gives:
$$(\hat \nabla \hat J)_{X+\eta}(Y+\beta)=(\nabla_X J_1)Y+(\nabla _X (I-J_1J_2-J_1^2-J_2^2))(g^{-1}(\beta))+g((\nabla_X J_2)(g^{-1}(\beta)))$$
for all $X, Y\in \Gamma^\infty (TM)$ and $\eta, \beta\in \Gamma^\infty (T^*M)$, and the first statement is proved.
Moreover, a direct computation gives:
$$(\check \nabla \hat J)_{X+\eta}(Y+\beta)=(\nabla_X J_1)Y+(\nabla _X (I-J_1J_2-J_1^2-J_2^2)g^{-1})(\beta)+(\nabla_X g)Y+(\nabla_XJ_2^*)\beta$$
for all $X, Y\in \Gamma^\infty (TM)$ and $\eta, \beta\in \Gamma^\infty (T^*M)$, and the proof is complete.
\end{proof}

\section{Integrability of some generalized almost plastic structures}

Let $M$ be a smooth manifold and let $\nabla$ be an affine connection on $M$.
\begin{definition}
A generalized structure $\hat{J}$ on $M$ is called {\it $\nabla$-integrable} if its Nijenhuis tensor field $N^{\nabla}({\hat{J}})$ with respect to $\nabla$:
$$N^{\nabla}({\hat{J}})(\sigma, \tau):=[\hat{J}\sigma,\hat{J}\tau]_{\nabla}-\hat{J}[\hat{J}\sigma, \tau]_{\nabla}-\hat{J}[\sigma, \hat{J}\tau]_{\nabla} +\hat{J}^{2}[\sigma, \tau]_{\nabla}$$ vanishes for all $\sigma,\tau \in {\Gamma}^{\infty}(TM\oplus T^*M)$.
\end{definition}

We shall study the integrability of some generalized almost plastic structures, characterizing it also in terms of quasi-statistical structures.
We recall the following definition.

\begin{definition} \cite{k, ma}
Let $g$ be a non-degenerate $(0,2)$-tensor field on $M$ and let $\nabla$ be an affine connection on $M$ with torsion operator $T^{\nabla}$.
Then, $(g,\nabla)$ is called a \textit{quasi-statistical structure} on $M$
if
$$(\nabla _X g)(Y,Z)-(\nabla _Y g)(X,Z)+g(T^{\nabla}(X,Y),Z)=0$$
for all $X, Y, Z \in \Gamma^{\infty} (TM)$, where $T^{\nabla}$ is the torsion of the given connection $\nabla$.
If $(g,\nabla)$ is a quasi-statistical structure on $M$, then, $(M,g,\nabla)$ is called a \textit{quasi-statistical manifold}.
\end{definition}
For generalized quasi-statistical structures, see \cite{124, 96,94}.

\subsection{Integrability of generalized almost plastic structures induced by two almost plastic structures on $M$}

\begin {proposition} Let $M$ be a smooth manifold, let $\nabla$ be an affine connection on $M$, and let $J_1,J_2$ be two almost plastic structures on $M$. Then, the generalized almost plastic structure defined by
$$\hat{J}:=\begin{pmatrix}
                 J_1 & 0 \\
                 0 & J_2^* \
               \end{pmatrix},
$$
is $\nabla$-integrable if and only if the following conditions are satisfied:
\begin{equation} \label{M15}\left \{ \begin{array}{l} N(J_1) = 0\\

\vspace{0.1cm}

{\nabla}_{J_1X}J_2= J_2({\nabla}_{X}J_2)
\end{array}
\right.
\end{equation}
for all $X\in \Gamma^\infty (TM)$.
\end{proposition}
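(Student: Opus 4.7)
The plan is to exploit the block-diagonal structure of $\hat J$ together with the $\mathbb{R}$-bilinearity and antisymmetry of $N^{\nabla}(\hat J)$. Writing $\sigma=X+\eta$ and $\tau=Y+\beta$, the vanishing of $N^{\nabla}(\hat J)$ reduces to checking three cases: pure vector inputs $(X,Y)$, pure covector inputs $(\eta,\beta)$, and the mixed type $(X,\beta)$.

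First I would dispose of the pure covector case: since $[\eta,\beta]_{\nabla}=0$ and $\hat J$ preserves $T^{*}M$ in the form $J_2^{*}$, every bracket occurring in $N^{\nabla}(\hat J)(\eta,\beta)$ vanishes, so this component contributes no condition. Next, for pure vector inputs, the bracket $[X,Y]_{\nabla}=[X,Y]$ stays inside $TM$, and the restriction $\hat J\big|_{TM}=J_1$ reduces the four-term expression term-by-term to the classical Nijenhuis tensor, so $N^{\nabla}(\hat J)(X,Y)=N(J_1)(X,Y)$, producing the first required condition.

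The heart of the proof is the mixed case $(X,\beta)$. Here $[X,\beta]_{\nabla}=\nabla_X\beta$ is purely a covector, hence $\hat J$ acts through $J_2^{*}$. Expanding
$$N^{\nabla}(\hat J)(X,\beta)=\nabla_{J_1X}(J_2^{*}\beta)-J_2^{*}(\nabla_{J_1X}\beta)-J_2^{*}\bigl(\nabla_X(J_2^{*}\beta)\bigr)+(J_2^{*})^{2}(\nabla_X\beta),$$
and applying the Leibniz rule $\nabla_Z(J_2^{*}\beta)=(\nabla_Z J_2^{*})\beta+J_2^{*}(\nabla_Z\beta)$, the terms containing $\nabla\beta$ cancel, leaving
$$N^{\nabla}(\hat J)(X,\beta)=(\nabla_{J_1X}J_2^{*})\beta-J_2^{*}(\nabla_X J_2^{*})\beta.$$
Using $(\nabla_ZJ_2^{*})=(\nabla_ZJ_2)^{*}$ and transposing, the vanishing of this expression is equivalent to the stated relation $\nabla_{J_1X}J_2=J_2(\nabla_XJ_2)$ on $TM$. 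The remaining mixed case $(\eta,Y)$ gives no new condition by the antisymmetry of $N^{\nabla}(\hat J)$. Sufficiency follows by running these equivalences backwards and adding the four components.

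The main obstacle I anticipate is purely bookkeeping in the mixed case: one must carefully apply the Leibniz rule on $T^{*}M$, replace $\nabla J_2^{*}$ by the transpose of $\nabla J_2$, and track the order of composition under transposition. The pure vector and pure covector cases are essentially immediate from the definitions.
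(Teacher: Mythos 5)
Your proposal follows essentially the same route as the paper: split $N^{\nabla}(\hat J)$ by bilinearity into its $TM$- and $T^*M$-components, identify the vector part with $N(J_1)$ and the covector part with $(\nabla_{J_1X}J_2^{*})-J_2^{*}(\nabla_{X}J_2^{*})$ applied to $\beta$ (minus the same in $\eta$), then dualize back to $TM$. The one point you flag but do not actually resolve --- and which the paper's own proof also glosses over --- is the order of composition under transposition: since $(AB)^{*}=B^{*}A^{*}$, the operator $J_2^{*}\circ(\nabla_{X}J_2^{*})=J_2^{*}\circ(\nabla_{X}J_2)^{*}$ is the transpose of $(\nabla_{X}J_2)\circ J_2$, so the condition you obtain on $TM$ is $\nabla_{J_1X}J_2=(\nabla_{X}J_2)J_2$, which coincides with the stated $J_2(\nabla_{X}J_2)$ only if $J_2$ commutes with $\nabla_{X}J_2$.
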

\begin{proof} A direct computation gives:
\begin{align*}
N^\nabla(\hat J)(X+\eta,Y+\beta)&=N(J_1)(X,Y)\\
&\hspace{12pt}+((\nabla_{J_1X}J_2^*)-J_2^*(\nabla_{X}J_2^*))(\beta)-((\nabla_{J_1Y}J_2^*)-J_2^*(\nabla_{Y}J_2^*))(\eta)
\end{align*}
for all $X,Y\in \Gamma^\infty (TM)$ and $\eta, \beta \in \Gamma^\infty (T^*M)$.
Moreover, for any $X,Y\in \Gamma^\infty (TM)$ and for any $\eta \in \Gamma^\infty (T^*M)$:
$$((\nabla_{J_1X}J_2^*)(\eta))(Y)=\eta((\nabla_{J_1X}J_2)Y)$$
and
$$((J_2^*(\nabla_{X}J_2^*))(\eta))(Y)=\eta(J_2(\nabla_{X}J_2)Y),$$
and the proof is complete.
\end{proof}

\begin{remark}
We remark that if $J_1=J_2=J$, then (\ref{M15}) becomes:
\begin{equation*}\left \{ \begin{array}{l} T^{\nabla}(JX,JY)-JT^{\nabla}(JX,Y)-JT^{\nabla}(X,JY)+J^2T^{\nabla}(X,Y)=0\\

\vspace{0.1cm}

{\nabla}_{JX}J= J({\nabla}_{X}J)
\end{array}
\right.
\end{equation*}
for all $X,Y\in \Gamma^\infty (TM)$.
In particular, for a torsion-free affine connection, the integrability condition becomes:
$${\nabla}_{JX}J= J({\nabla}_{X}J)$$
for all $X\in \Gamma^\infty (TM)$.
\end{remark}

\subsection{Integrability of generalized almost plastic structures induced by a polynomial structure $J$ on $M$ such that $J^3-J+I=0$}

Taking inspiration from (\ref {M30}), we consider the generalized almost plastic structure defined by:
\begin{equation} \label{M45}\hat{J}:=\begin{pmatrix}
                 J & (I-J^2)g^{-1} \\
                 g &0 \\
               \end{pmatrix},
\end{equation}
where $J$ is a polynomial structure on $M$ such that $J^3-J+I=0$.

\begin{proposition} Let $g$ be a pseudo-Riemannian metric and let $\nabla$ be an affine connection on $M$. If $J$ is integrable, $\nabla J=0$, and $(M,g,\nabla)$ is a quasi-statistical manifold, then $\hat J$ given by (\ref{M45}) is $\nabla$-integrable.
\end{proposition}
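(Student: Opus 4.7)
The plan is to compute $N^{\nabla}(\hat J)(X+\eta, Y+\beta)$ directly from the definition, split the result into its $TM$-valued and $T^*M$-valued components, and check that each one vanishes under the three hypotheses.

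First I would write $\hat J(X+\eta) = JX + (I-J^2)g^{-1}(\eta) + g(X)$ and use the identity $J^3 = J-I$ (equivalent to $J^3-J+I=0$) to obtain a compact formula for $\hat J^2$; a short matrix computation gives $\hat J^2(X+\eta) = X + g^{-1}(\eta) + g(JX) + g(I-J^2)g^{-1}(\eta)$. Then I would substitute into
\[N^{\nabla}(\hat J)(\sigma,\tau) = [\hat J\sigma,\hat J\tau]_{\nabla} - \hat J[\hat J\sigma,\tau]_{\nabla} - \hat J[\sigma,\hat J\tau]_{\nabla} + \hat J^{2}[\sigma,\tau]_{\nabla}\]
with $\sigma=X+\eta$ and $\tau=Y+\beta$, expanding each bracket via $[X+\eta,Y+\beta]_{\nabla} = [X,Y] + \nabla_X\beta - \nabla_Y\eta$.

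Next I would invoke $\nabla J = 0$, which implies $\nabla(I-J^2) = 0$ and so allows $J$ and $I-J^2$ to commute past every $\nabla_X$. After doing so, the $TM$-valued part should reorganize into (a) a copy of the ordinary Nijenhuis tensor $N(J)(X,Y)$, which vanishes by the integrability of $J$, and (b) residual cross-terms involving $\nabla g$ and $\nabla g^{-1}$. Rewriting the latter via $\nabla_X(g(Y)) = g(\nabla_X Y) + (\nabla_X g)(Y,\cdot)$ and the companion identity for $g^{-1}$, the surviving terms should assemble into exactly the quasi-statistical expression $(\nabla_X g)(Y,\cdot) - (\nabla_Y g)(X,\cdot) + g(T^{\nabla}(X,Y),\cdot)$, and hence vanish. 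The same pattern, transposed via $J^*$, handles the $T^*M$-valued part, where the Lie-bracket term $[X,Y]$ gets replaced by its $\flat_g$-image and the relevant quasi-statistical identity is evaluated at $J$-transformed arguments.

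The main obstacle will be the bookkeeping: the off-diagonal entries $(I-J^2)g^{-1}$ and $g$ of $\hat J$ spawn a large number of mixed vector/covector contributions, and one has to keep straight when $\nabla$ acts on a vector field, on a $1$-form, or on the composite $g^{-1}(\eta)$. The key conceptual step is to recognize that once $\nabla J = 0$ cancels every Leibniz term involving $J$ and $N(J) = 0$ disposes of the purely vectorial Nijenhuis piece, what is left matches the quasi-statistical identity exactly, so the proof reduces to a careful regrouping.
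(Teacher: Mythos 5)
Your plan is correct and is essentially the paper's own proof: the authors likewise compute $N^\nabla(\hat J)$ directly (organized over the three types of pairs $(X,Y)$, $(X,g(Z))$, $(g(Z),g(W))$, which covers your general $(X+\eta,Y+\beta)$ by bilinearity since $g$ is non-degenerate), use $\nabla J=0$ to kill the $\nabla J$- and $\nabla J^2$-terms, $N(J)=0$ for the purely vectorial Nijenhuis piece, and regroup everything that remains into copies of the quasi-statistical expression $(\nabla_U g)V-(\nabla_V g)U+g(T^\nabla(U,V))$. The one refinement to your wording is that the residue is a \emph{sum of several} such expressions evaluated at different argument pairs (e.g.\ $(X,Y)$, $(JX,Y)$, $(X,JY)$, $(J^2Z,W)$, $(JX,(I-J^2)Z)$, \dots), each vanishing separately by the quasi-statistical hypothesis, rather than a single one.
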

\begin{proof}
A direct computation gives:
\begin{align*}
N^\nabla(\hat J)(X,Y)&=N(J)(X,Y)+(I-J^2)g^{-1}((\nabla_Yg)X-(\nabla_Xg)Y+g(T^\nabla(Y,X)))\\
&\hspace{12pt}+(\nabla_{JX}g)Y-(\nabla_Yg)JX+g(T^\nabla(JX,Y))\\
&\hspace{12pt}+(\nabla_Xg)JY-(\nabla_{JY}g)X+g(T^\nabla(X,JY))\\
&\hspace{12pt}+(\nabla_Yg)X-(\nabla_Xg)Y+g(T^\nabla(Y,X))+(\nabla_Y J^*)g(X)-(\nabla_X J^*)g(Y)\\
&\hspace{12pt}+J^*((\nabla_Yg)X-(\nabla_Xg)Y+g(T^\nabla(Y,X)))
\end{align*}
for all $X,Y\in\Gamma^\infty(TM)$;
\begin{align*}
N^\nabla(\hat J)(X,g(Z))&=-(\nabla _{JX}J^2)Z-(\nabla_{(I-J^2)Z }J)X+J(\nabla_XJ^2)Z\\
&\hspace{12pt}-T^\nabla(JX, (I-J^2)Z)+JT^\nabla(X,(I-J^2)Z)\\
&\hspace{12pt}-(I-J^2)g^{-1}((\nabla_{JX}g)Z)+J(I-J^2)(g^{-1}(\nabla_Xg)Z)\\
&\hspace{12pt}+(\nabla_{X}g)Z-(\nabla_Zg)X+g(T^\nabla(X,Z))\\
&\hspace{12pt}-(J^*)^2((\nabla_Xg)Z)+(\nabla_{J^2Z}g)X-g(T^\nabla(X,J^2Z))\\
&=-g^{-1}((\nabla_{JX}g)(I-J^2)Z))-(\nabla_{(I-J^2)Z} g)JX+g(T^\nabla(JX,(I-J^2)Z)))\\
&\hspace{12pt}+Jg^{-1}((\nabla_Xg)(I-J^2)Z)-(\nabla_{(I-J^2)Z} g)X+g(T^\nabla(X,(I-J^2)Z)))\\
&\hspace{12pt}+(\nabla_{X}g)Z-(\nabla_Zg)X+g(T^\nabla(X,Z))\\
&\hspace{12pt}-(\nabla_Xg)J^2Z+(\nabla_{J^2Z}g)X-g(T^\nabla(X,J^2Z))
\end{align*}
for all $X,Z\in\Gamma^\infty(TM)$;
\begin{align*}
N^\nabla(\hat J)(g(Z),g(W))&=g^{-1}((\nabla_Wg)Z-(\nabla_Zg)W+g(T^\nabla(W,Z)))\\
&\hspace{12pt}+g^{-1}((\nabla_{J^2W}g)J^2Z-(\nabla_{J^2Z}g)J^2W+g(T^\nabla(J^2W,J^2Z)))\\
&\hspace{12pt}+g^{-1}((\nabla_Zg)J^2W-(\nabla_{J^2W}g)Z+g(T^\nabla(Z,J^2W)))\\
&\hspace{12pt}+g^{-1}((\nabla_{J^2Z}g)W-(\nabla_Wg)J^2W+g(T^\nabla(J^2Z,W)))
\end{align*}
for all $Z,W\in\Gamma^\infty(TM)$, and we get the conclusion.
\end{proof}

\noindent Adara M. BLAGA, \\
Department of Mathematics, \\
Faculty of Mathematics and Computer Science, \\
West University of Timi\c{s}oara, \\
Bld. V. P\^{a}rvan 4, 300223, Timi\c{s}oara, Romania, \\
Email: adarablaga@yahoo.com

\bigskip

\noindent Antonella NANNICINI, \\
Department of Mathematics and Informatics "U. Dini", \\
University of Florence,\\
Viale Morgagni 67/a, 50134, Firenze, Italy,\\
Email: antonella.nannicini@unifi.it


\begin{thebibliography}{99}

\bibitem{124} A.M. Blaga, A. Nannicini, Statistical structures, $\alpha$-connections and generalized geometry, Rivista di Matematica della Universita di Parma, 13(2), (2022); Proceedings of the meeting Cohomology of Complex Manifolds and Special Structures II, Levico Terme (Trento), July 5-9, 2021.

\bibitem{96} A.M. Blaga, A. Nannicini, $\alpha$-connections in generalized geometry, J. Geom. Phys., 165, 104225 (2021).
https://doi.org/10.1016/j.geomphys.2021.104225

\bibitem{94} A.M. Blaga, A. Nannicini, Generalized quasi-statistical structures, Bulletin of the Belgian Mathematical Society -- Simon Stevin, 27(5), 731--754 (2020).
https://doi.org/10.36045/j.bbms.191023

\bibitem{metalic} C.E. Hretcanu, M. Crasmareanu, Metallic structures on Riemannian manifolds, Rev. Un. Mat. Argentina 54(2), 15--27 (2013).

\bibitem{k} T. Kurose, Statistical Manifolds Admitting Torsion, Geometry and Something; Fukuoka Univ.: Fukuoka-shi (In Japanese) (2007).

\bibitem {m} L. Marohnić, T. Strmečki,  Plastic number: construction and applications, Advanced research in scientific areas,
international virtual conference, December,  3-7: 1523--1528 (2012).

\bibitem{ma} H. Matsuzoe, Quasi-statistical manifolds and geometry of affine distributions, Pure and Applied Differential Geometry 2012: In Memory of Franki Dillen, Berichte aus der Mathematik, ed. Joeri Van der Veken, Ignace Van de Woestyne, Leopold Verstraelen, Luc Vrancken, Shaker Verlag (2013).

\bibitem{v} Vanzura, J. Integrability conditions for polynomial structures. Kodai Math. Sem. Rep. 27 (1-2), 42-50 (1976)
\end{thebibliography}
\end{document}